\topskip \setlength{\parindent}{0pt} \setlength{\parskip}{5pt plus
\newtheorem{theorem}{Theorem}
\newtheorem{proposition}[theorem]{Proposition}
\newtheorem{corollary}[theorem]{Corollary}
\newtheorem{remark}{Remark}
\newtheorem{lemma}[theorem]{Lemma}
\begin{document}
\title{Congruence successions in compositions}

%

%

\author{Toufik Mansour}
\address{Department of Mathematics, University of Haifa, 31905 Haifa, Israel}
\email{tmansour@univ.haifa.ac.il}

\author{Mark Shattuck}
\address{Department of Mathematics, University of Tennessee, Knoxville, TN 37996 USA}
\email{shattuck@math.utk.edu}

\author{Mark C. Wilson}
\address{Department of Computer Science, University of Auckland, Private Bag 92019 Auckland, New Zealand}
\email{mcw@cs.auckland.ac.nz}

\keywords{composition, parity succession, combinatorial proof, asymptotic estimate}
\subjclass[2000]{05A15, 05A19, 05A16}

\maketitle
\begin{abstract}
A \emph{composition} is a sequence of positive integers, called \emph{parts}, having a fixed sum.  By an \emph{$m$-congruence succession}, we will mean a pair of adjacent parts $x$ and $y$ within a composition such that $x\equiv y~(\text{mod}~m)$.  Here, we consider the problem of counting the compositions of size $n$ according to the number of $m$-congruence successions, extending recent results concerning successions on subsets and permutations.  A general formula is obtained, which reduces in the limiting case to the known generating function formula for the number of Carlitz compositions.  Special attention is paid to the case $m=2$, where further enumerative results may be obtained by means of combinatorial arguments.  Finally, an asymptotic estimate is provided for the number of compositions of size $n$ having no $m$-congruence successions.

\end{abstract}

\section{Introduction}

Let $n$ be a positive integer.  A \emph{composition} $\sigma=\sigma_1\sigma_2\cdots\sigma_d$ of $n$ is any sequence of positive integers whose sum is $n$.  Each summand $\sigma_i$ is called a \emph{part} of the composition.  If $n,d \geq 1$, then let $\mathcal{C}_{n,d}$ denote the set of compositions of $n$ having exactly $d$ parts and $\mathcal{C}_n=\cup_{d=1}^n\mathcal{C}_{n,d}$.  By convention, there is a single composition of $n=0$ having zero parts.

If $m \geq 1$ and $0 \leq r \leq m-1$, by an {\em $(m,r)$-congruence succession} within a composition $\sigma=\sigma_1\sigma_2\cdots\sigma_d$, we will mean an index $i$ for which $\pi_{i+1}\equiv\pi_i+r~(\text{mod}~m)$.  An $(m,r)$-congruence succession in which $r=0$ will be referred to as an {\em $m$-congruence succession}, the $m=2$ case being termed a {\em parity succession}.  A  \emph{parity-alternating} composition is one that contains no parity successions, that is, the parts alternate between even and odd values.  This concept of parity succession for compositions extends an earlier one that was introduced for subsets \cite{M2} and later considered on permutations \cite{M1}.  The terminology is an adaptation of an analogous usage in the study of integer sequences $(p_1,p_2,\ldots)$ in which a succession refers to a pair $p_i,p_{i+1}$ with $p_{i+1}=p_i+1$ (see, e.g., \cite{AM,Tan1,KMW}).  For other related problems involving restrictions on compositions, the reader is referred to the text \cite{HM} and such papers as \cite{CH,HK}.

Enumerating finite discrete structures according to the parity of individual elements perhaps started with the following formula of Tanny \cite{Tan2} for the number $g(n,k)$ of alternating $k$-subsets of $[n]$ given by

$$g(n,k)=\binom{\lfloor\frac{n+k}{2}\rfloor}{k}+\binom{\lfloor\frac{n+k-1}{2}\rfloor}{k}, \qquad 1 \leq k \leq n.$$

This result was recently generalized to any modulus in \cite{MM} and in terms of counting by successions in \cite{M2}.  Tanimoto \cite{Tani} considered a comparable version of the problem on permutations in his investigation of signed Eulerian numbers. There one finds the formula for the number $h(n)$ of parity-alternating permutations of length $n$ given by
$$h(n)=\frac{3+(-1)^n}{2}\left\lfloor\frac{n}{2}\right\rfloor!\left\lfloor\frac{n+1}{2}\right\rfloor!,$$
which has been generalized in terms of succession counting in \cite{M1}; see also \cite{M3}.

In the next section, we consider the problem of counting compositions of $n$ according to the number of $(m,r)$-congruence successions, as defined above, and derive an explicit formula for the generating function for all $m$ and $r$ (see Theorem \ref{t1} below).  When $r=0$, we obtain as a corollary a relatively simple expression for the generating function $F_m$ which counts compositions according to the number of $m$-congruence successions.  Letting $m\rightarrow \infty$ and taking the variable in $F_m$ which marks the number of $m$-congruence successions to be zero recovers the generating function for the number of \emph{Carlitz} compositions, i.e., those having no consecutive parts equal; see, e.g., \cite{GoHi2002}.

In the third section, we obtain some enumerative results concerning the case $m=2$.  In particular, we provide a bijective proof for a related recurrence and enumerate, in two different ways, the parity-alternating compositions of size $n$.  As a consequence, we obtain a combinatorial proof of a pair of binomial identities which we were unable to find in the literature.  In the final section, we provide asymptotic estimates for the number of compositions of size $n$ having no $m$-congruence successions as $n \rightarrow \infty$, which may be extended to compositions having any fixed number of successions.

\section{Counting compositions by number of $(m,r)$-congruence successions}

We will say that the sequence $\pi=\pi_1\pi_2\cdots\pi_d$ has an {\em $(m,r)$-congruence succession} at index $i$ if $\pi_{i+1}\equiv\pi_i+r~(\text{mod}~m)$, where $0 \leq r \leq m-1$. We will denote the number of $(m,r)$-congruence successions within a sequence $\pi$ by $cl_{m,r}(\pi)$. Let $R_{m,r;a}(x,y,q)=R_{a}(x,y,q)$ be the generating function for the number of compositions of $n$ with exactly $d$ parts whose first part is $a$ according to the statistic $cl_{m,r}$, that is,
$$R_a(x,y,q)=\sum_{n\geq0}\sum_{d=0}^n x^ny^d \left(\sum_{\pi=a\pi'\in\mathcal{C}_{n,d}}q^{cl_{m,r}(\pi)}\right).$$
Clearly, we have $R_{m+a}(x,y,q)=x^{m}R_a(x,y,q)$ for all $a\geq1$. Let $R_{m,r}(x,y,q)=R(x,y,q)=1+\sum_{a\geq1}R_a(x,y,q)$. By the definitions, we have
$$R_a(x,y,q)=x^ayR(x,y,q)+x^ay(q-1)\sum R_t(x,y,q),$$
for all $a\geq 1$, where the sum is taken over all positive integers $t$ such that
$t\equiv\ a+r~(\text{mod}~m)$.
Hence,
$$\sum_{i\geq0}R_{im+a}(x,y,q)=\frac{x^ay}{1-x^m}R(x,y,q)+\frac{x^ay(q-1)}{1-x^m}\sum_{i\geq0}R_{im+a+r}(x,y,q),$$
if $1 \leq a \leq m-r$, and
$$\sum_{i\geq0}R_{im+a}(x,y,q)=\frac{x^ay}{1-x^m}R(x,y,q)+\frac{x^ay(q-1)}{1-x^m}\sum_{i\geq0}R_{im+a+r-m}(x,y,q),$$
if $m-r+1 \leq a \leq m$.   The last two equalities may be expressed as
\begin{align}
G_{j}(x,y,q)&=\frac{x^jy}{1-x^m}R(x,y,q)+\frac{x^jy(q-1)}{1-x^m}G_{j+r}(x,y,q),\quad 1\leq j\leq m-r,\notag\\
G_{j}(x,y,q)&=\frac{x^jy}{1-x^m}R(x,y,q)+\frac{x^jy(q-1)}{1-x^m}G_{j+r-m}(x,y,q),\quad m-r+1\leq j\leq m,\label{eqr1}
\end{align}
where $G_j(x,y,q)=\sum_{i\geq0}R_{im+j}(x,y,q)$.

In order to find an explicit formula for $G_j(x,y,q)$, we will need the following lemma.
\begin{lemma}\label{lem01}
Suppose $x_j=a_j+b_jx_{j+r}$ for all $j=1,2,\ldots,m-r$ and $x_j=a_j+b_jx_{j+r-m}$ for all $j=m-r+1,m-r+2,\ldots,m$. Let $s=\gcd(m,r)$ and $p=m/s$. Then for all $j=1,2,\ldots,s$ and $\ell=0,1,\ldots,p-1$, we have
$$x_{j+\ell r}=\sum_{i=\ell}^{\ell+p-1} \frac{a_{j+ir}\prod_{k=\ell}^{i-1}b_{j+kr}}{1-\prod_{k=\ell}^{\ell+p-1}b_{j+kr}},$$
where $x_{j+m}=x_j$, $a_{j+m}=a_j$ and $b_{j+m}=b_j$.
\end{lemma}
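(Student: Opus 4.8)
The plan is to recognize that, despite its two-case appearance, the given system is a single cyclic linear recurrence. Using the stated convention $x_{j+m}=x_j$, the second family of equations is obtained from the first merely by reducing the index $j+r$ modulo $m$: whenever $m-r+1\le j\le m$ we have $j+r>m$, so $x_{j+r}=x_{j+r-m}$, and thus all $m$ equations read uniformly as $x_j=a_j+b_jx_{j+r}$ with indices interpreted modulo $m$ (representatives in $\{1,\dots,m\}$). The first step is therefore to study the shift map $j\mapsto j+r\pmod m$. Since $\gcd(m,r)=s$, the subgroup of $\mathbb{Z}/m\mathbb{Z}$ generated by $r$ equals $\langle s\rangle$, so this map partitions $\{1,\dots,m\}$ into exactly $s$ orbits, each of length $p=m/s$, and the sets $\{j,\,j+r,\,\dots,\,j+(p-1)r\}$ for $j=1,\dots,s$ (all indices mod $m$) form a complete, disjoint list of these orbits, being precisely the residue classes modulo $s$. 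Consequently the system decouples into $s$ independent cyclic subsystems, one on each orbit, and it suffices to solve a single such subsystem.

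Next I would solve the subsystem attached to a fixed $j\in\{1,\dots,s\}$. Writing the recurrence as $x_{j+\ell r}=a_{j+\ell r}+b_{j+\ell r}x_{j+(\ell+1)r}$ and iterating, I would prove by induction on $t$ that
\[
x_{j+\ell r}=\sum_{i=\ell}^{\ell+t-1}a_{j+ir}\prod_{k=\ell}^{i-1}b_{j+kr}+\left(\prod_{k=\ell}^{\ell+t-1}b_{j+kr}\right)x_{j+(\ell+t)r},\qquad 0\le t\le p,
\]
with the empty product equal to $1$. Taking $t=p$ and using $pr\equiv0\pmod m$ together with the periodicity conventions gives $x_{j+(\ell+p)r}=x_{j+\ell r}$, which lets me collect the $x_{j+\ell r}$ terms. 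Solving the resulting scalar equation produces
\[
x_{j+\ell r}\Bigl(1-\prod_{k=\ell}^{\ell+p-1}b_{j+kr}\Bigr)=\sum_{i=\ell}^{\ell+p-1}a_{j+ir}\prod_{k=\ell}^{i-1}b_{j+kr},
\]
which is exactly the claimed formula after dividing through.

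The bookkeeping is the only real difficulty. One must check that the telescoping correctly yields the partial products $\prod_{k=\ell}^{i-1}b_{j+kr}$ and that the cyclic product $\prod_{k=\ell}^{\ell+p-1}b_{j+kr}$ appearing in the denominator is genuinely independent of the starting index $\ell$: this follows because the periodicity $b_{j+(k+p)r}=b_{j+kr}$ (a consequence of $pr\equiv0\pmod m$ and $b_{j+m}=b_j$) shows every window of $p$ consecutive factors around the orbit has the same product. With this observation the formula is seen to hold simultaneously for all $\ell=0,1,\dots,p-1$ and all $j=1,\dots,s$, covering every $x_1,\dots,x_m$ exactly once and completing the proof.
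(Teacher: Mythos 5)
Your proposal is correct and follows essentially the same route as the paper's own proof: unroll the cyclic recurrence $p$ steps along the orbit of $j$ under the shift by $r$, use $pr\equiv 0\ (\mathrm{mod}\ m)$ to identify $x_{j+(\ell+p)r}$ with $x_{j+\ell r}$, and solve the resulting scalar equation. Your additional remarks on the orbit decomposition and the $\ell$-independence of the cyclic product just make explicit what the paper leaves implicit.
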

\begin{proof}
Let $j=1,2,\ldots,s$. By definition of the sequence $x_j$ and $m$-periodicity, we may write
\begin{align*}
x_j&=a_j+b_jx_{j+r}=a_j+b_ja_{j+r}+b_jb_{j+r}x_{j+2r}\\
&=\cdots=a_j+b_ja_{j+r}+\cdots+b_jb_{j+r}\cdots b_{j+(p-2)r}a_{j+(p-1)r}+b_jb_{j+r}\cdots b_{j+(p-1)r}x_{j+pr}.
\end{align*}
Since $pr\equiv0~(\text{mod}~m)$, we have
$$x_j=\sum_{i=0}^{p-1} \frac{a_{j+ir}\prod_{k=0}^{i-1}b_{j+kr}}{1-\prod_{k=0}^{p-1}b_{j+kr}}.$$
More generally, for any $\ell=0,1,\ldots,p-1$,
$$x_{j+\ell r}=\sum_{i=\ell}^{\ell+p-1} \frac{a_{j+ir}\prod_{k=\ell}^{i-1}b_{j+kr}}{1-\prod_{k=\ell}^{\ell+p-1}b_{j+kr}}.$$
\end{proof}

Let us denote by $\overline{t}$ the member of $\{1,2,\ldots,m\}$ such that $t\equiv \overline{t}~(\text{mod}~m)$ for a positive integer $t$.  When $a_j=\frac{x^jy}{1-x^m}R(x,y,q)$ and $b_j=\frac{x^jy(q-1)}{1-x^m}$ for $1 \leq j \leq m$ in Lemma \ref{lem01}, we get
\begin{align}
x_{j+\ell r} &=\sum_{i=\ell}^{\ell+p-1}\frac{\frac{x^{\overline{j+ir}}y}{1-x^m}R(x,y,q)\prod_{k=\ell}^{i-1}\frac{x^{\overline{j+kr}}y(q-1)}{1-x^m}}{1-\prod_{k=\ell}^{\ell+p-1}\frac{x^{\overline{j+kr}}y(q-1)}{1-x^m}}\notag\\
&=\frac{R(x,y,q)}{1-\left(\frac{y(q-1)}{1-x^m}\right)^{p}\prod_{k=\ell}^{\ell+p-1}x^{\overline{j+kr}}}\sum_{i=0}^{p-1}\frac{x^{\overline{j+(i+\ell)r}}y^{i+1}(q-1)^i\prod_{k=\ell}^{i+\ell-1}x^{\overline{j+kr}}}{(1-x^m)^{i+1}}\label{eqr2},
\end{align}
for all $j=1,2,\ldots,s$ and $\ell=0,1,\ldots,p-1$, where $s=\gcd(m,r)$ and $p=m/s$. By \eqref{eqr1}, we have $G_{\overline{j+\ell r}}(x,y,q)=x_{j+\ell r}=x_{\overline{j+\ell r}}$, where $x_{j+\ell r}$ is given by \eqref{eqr2}.  Note that the set of indices $j+\ell r$ for $1\leq j \leq s $ and $0\leq \ell \leq p-1$ is a complete residue set $(\text{mod}~m)$. Using \eqref{eqr2} and the fact that $R(x,y,q)=1+\sum_{a=1}^m G_a(x,y,q)$, we obtain the following result.

\begin{theorem}\label{t1}
If $m \ge1 $, $0\leq r\leq m-1$, $s=\gcd(m,r)$ and $p=m/s$, then
\begin{equation}\label{t1e1}
R_{m,r}(x,y,q)=\frac{1}{1-\sum\limits_{j=1}^s\sum\limits_{\ell=0}^{p-1}\sum\limits_{i=0}^{p-1}\frac{x^{\overline{j+(i+\ell)r}}y^{i+1}(q-1)^i\prod_{k=\ell}^{i+\ell-1}x^{\overline{j+kr}}}{(1-x^m)^{i+1}\left(1-\left(\frac{y(q-1)}{1-x^m}\right)^{p}\prod_{k=\ell}^{\ell+p-1}x^{\overline{j+kr}}\right)}}.
\end{equation}
\end{theorem}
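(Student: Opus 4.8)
The plan is to assemble \eqref{t1e1} directly from the expression \eqref{eqr2} for the $G_j$ together with the defining relation $R(x,y,q)=1+\sum_{a=1}^m G_a(x,y,q)$. The crucial structural observation is that in \eqref{eqr2} the unknown series $R=R(x,y,q)$ occurs as a multiplicative factor common to every $G_{\overline{j+\ell r}}$. Hence $\sum_{a=1}^m G_a$ will equal $R$ times an explicit expression, and substituting this into $R=1+\sum_{a=1}^m G_a$ produces a single linear equation in $R$ that can be solved at once.

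First I would justify rewriting $\sum_{a=1}^m G_a$ as the double sum $\sum_{j=1}^s\sum_{\ell=0}^{p-1}G_{\overline{j+\ell r}}$. This rests on the fact, noted just before the theorem, that with $s=\gcd(m,r)$ and $p=m/s$ the indices $j+\ell r$ for $1\le j\le s$ and $0\le\ell\le p-1$ form a complete residue system modulo $m$. To verify this I would write $r=sr'$ with $\gcd(r',p)=1$; then $s(\ell r'\bmod p)$ runs over $\{0,s,2s,\ldots,(p-1)s\}$ as $\ell$ ranges over $0,1,\ldots,p-1$, so that $\{j+\ell r\bmod m\}$ is precisely $\{1,2,\ldots,m\}$, each class occurring exactly once. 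This guarantees that every $G_a$ with $1\le a\le m$ is counted once, so the substitution of \eqref{eqr2} accounts for the full sum $\sum_{a=1}^m G_a$.

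Finally I would substitute \eqref{eqr2} into the double sum, factor out the common $R$, and write $\sum_{a=1}^m G_a=R\cdot S$, where $S$ is exactly the triple sum in the denominator of \eqref{t1e1}. Then $R=1+\sum_{a=1}^m G_a=1+RS$ gives $R(1-S)=1$, whence $R=1/(1-S)$. Because each term of $S$ carries a factor $x^{\overline{j+(i+\ell)r}}$ with exponent at least $1$, the series $S$ has vanishing constant term, so $1-S$ is invertible as a formal power series and $R=1/(1-S)$ is well-defined, yielding \eqref{t1e1}. I do not expect a genuine obstacle here, since the substantive work resides in Lemma \ref{lem01} and in deriving \eqref{eqr2}; the only points needing care are the verification that the index set is a complete residue system, which legitimizes collapsing $\sum_{a=1}^m$ into the double sum over $(j,\ell)$, and the observation that $R$ separates as a common factor, rendering the resulting equation linear.
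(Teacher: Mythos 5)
Your proposal is correct and follows essentially the same route as the paper, which derives \eqref{t1e1} by substituting \eqref{eqr2} into $R=1+\sum_{a=1}^m G_a$ (using the complete-residue-system observation to reindex the sum) and solving the resulting linear equation for $R$. You have simply filled in the details the paper leaves to the reader, including the verification that $\{j+\ell r\}$ covers all residues mod $m$.
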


Note that in general we are unable to simplify the number theoretic product $\prod_{k=\ell}^{i+\ell-1}x^{\overline{j+kr}}$ appearing in \eqref{t1e1}.

Let us say that the sequence $\pi=\pi_1\pi_2\cdots\pi_d$ has an {\em $m$-congruence succession} at index $i$ if $\pi_{i+1}\equiv\pi_i~(\text{mod}~m)$ and denote the number of $m$-congruence successions in a sequence $\pi$ by $cl_m(\pi)$.  Let
$$F_m(x,y,q)=\sum_{n\geq0}\sum_{d=0}^n x^ny^d \left(\sum_{\pi\in\mathcal{C}_{n,d}}q^{cl_{m}(\pi)}\right).$$
Taking $r=0$ in \eqref{t1e1}, and noting $s=\gcd(m,0)=m$, gives the following result.

\begin{corollary}\label{c1}
If $m \geq 1$, then
\begin{equation}\label{c1e1}
F_m(x,y,q)=\frac{1}{1-\sum_{a=1}^m\left(\frac{x^ay}{1-x^m-x^ay(q-1)}\right)}.
\end{equation}
\end{corollary}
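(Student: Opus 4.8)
The plan is to obtain Corollary~\ref{c1} as a direct specialization of Theorem~\ref{t1}, setting $r=0$ throughout formula~\eqref{t1e1}. First I would record the two parameters that govern the shape of that formula: with $r=0$ we have $s=\gcd(m,0)=m$ and hence $p=m/s=1$. The decisive consequence is that $p=1$ forces both inner summation ranges to collapse, since $\ell$ and $i$ each run only over $\{0,\ldots,p-1\}=\{0\}$. Thus the triple sum in the denominator of~\eqref{t1e1} reduces to a single sum over $j=1,\ldots,m$ with $\ell=0$ and $i=0$ held fixed.

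Next I would evaluate each factor at $\ell=i=0$ and $r=0$. The exponent $\overline{j+(i+\ell)r}$ becomes $\overline{j}=j$ for $1\le j\le m$; the power $y^{i+1}$ becomes $y$ and $(q-1)^i$ becomes $1$; the product $\prod_{k=\ell}^{i+\ell-1}x^{\overline{j+kr}}$ has upper index below its lower index and is therefore an empty product equal to $1$; the factor $(1-x^m)^{i+1}$ becomes $1-x^m$; and in the remaining factor $\left(\frac{y(q-1)}{1-x^m}\right)^{p}\prod_{k=\ell}^{\ell+p-1}x^{\overline{j+kr}}$ we obtain $\frac{y(q-1)}{1-x^m}\cdot x^{\overline{j}}=\frac{x^{j}y(q-1)}{1-x^m}$. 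Substituting these values, the $j$-th summand becomes $\frac{x^{j}y}{(1-x^m)\left(1-\frac{x^{j}y(q-1)}{1-x^m}\right)}$, which, after multiplying numerator and denominator by $1-x^m$, clears to $\frac{x^{j}y}{1-x^m-x^{j}y(q-1)}$.

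Finally I would observe that when $r=0$ the defining condition $\pi_{i+1}\equiv\pi_i+r~(\text{mod}~m)$ is exactly $\pi_{i+1}\equiv\pi_i~(\text{mod}~m)$, so $cl_{m,0}=cl_m$ and hence $F_m(x,y,q)=R_{m,0}(x,y,q)$, the empty composition contributing the constant $1$ to each. Renaming the summation index $j$ as $a$ then yields precisely~\eqref{c1e1}.

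The computation is essentially a substitution, so there is no deep obstacle; the only points requiring care are the bookkeeping on the collapsed sums and the conventions for the empty product and for $\gcd(m,0)$. As a sanity check I would verify that the two-case split $1\le a\le m-r$ versus $m-r+1\le a\le m$ in~\eqref{eqr1} degenerates correctly when $r=0$: the second range is empty, every class falls into the first case with $j+r=j$, and solving $G_j=\frac{x^{j}y}{1-x^m}R+\frac{x^{j}y(q-1)}{1-x^m}G_j$ directly reproduces $G_j=\frac{x^{j}yR}{1-x^m-x^{j}y(q-1)}$, whence $R=1+\sum_{j=1}^{m}G_j$ gives~\eqref{c1e1} independently. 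This confirms that no residue class is omitted or double-counted in the reduction.
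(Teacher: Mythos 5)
Your proposal is correct and follows exactly the route the paper takes: the paper derives Corollary~\ref{c1} by setting $r=0$ in \eqref{t1e1} and noting $s=\gcd(m,0)=m$ (hence $p=1$), which collapses the triple sum to the single sum over $a=1,\dots,m$ just as you compute. Your additional direct verification via \eqref{eqr1} is a nice sanity check but not needed; the bookkeeping (empty product, $\overline{j}=j$, clearing $1-x^m$) is all handled correctly.
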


Letting $q=0$ and $m \rightarrow \infty$ in \eqref{c1e1} yields the generating function for the number of compositions having no $m$-congruence successions for all large $m$. Note that the only possible such compositions are those having no two adjacent parts the same. Thus, we get the following formula for the generating function which counts the Carlitz compositions according to the number of parts.

\begin{corollary}\label{c1a}
We have
\begin{equation}\label{c1ae1}
F_\infty(x,y,0)=\frac{1}{1-\sum_{a=1}^{\infty}\frac{x^ay}{1+x^ay}}.
\end{equation}
\end{corollary}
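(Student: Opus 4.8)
The plan is to derive \eqref{c1ae1} directly from the corollary \eqref{c1e1} by carefully taking the two limits. First I would substitute $q=0$ into \eqref{c1e1}, which replaces the factor $(q-1)$ by $-1$ and yields
\[
F_m(x,y,0)=\frac{1}{1-\sum_{a=1}^m \frac{x^a y}{1-x^m+x^a y}}.
\]
The key observation is that each summand in the denominator can be rewritten in a form whose $m\rightarrow\infty$ behaviour is transparent. Writing $\frac{x^a y}{1-x^m+x^a y}$, I would note that as $m\rightarrow\infty$ (with $|x|<1$ fixed) the term $x^m\rightarrow 0$, so each individual summand tends to $\frac{x^a y}{1+x^a y}$, while simultaneously the number of summands grows to infinity and the upper limit $a=m$ becomes $a=\infty$. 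Thus formally the denominator converges to $1-\sum_{a=1}^{\infty}\frac{x^a y}{1+x^a y}$, which is exactly \eqref{c1ae1}.

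The main step requiring care is justifying the interchange of the limit $m\rightarrow\infty$ with the infinite summation, i.e.\ showing that
\[
\lim_{m\rightarrow\infty}\sum_{a=1}^m \frac{x^a y}{1-x^m+x^a y}=\sum_{a=1}^{\infty}\frac{x^a y}{1+x^a y},
\]
as an identity of formal power series (equivalently, coefficientwise, or on a suitable domain of convergence where $|x|<1$). The natural way to make this rigorous is to observe that for any fixed target coefficient of $x^n$, only finitely many values of $a$ (namely $a\le n$) contribute, and for $m>n$ the factor $x^m$ contributes nothing to that coefficient; hence the coefficient of each monomial $x^n y^d$ stabilizes once $m$ exceeds $n$, and its stable value agrees with the corresponding coefficient of the right-hand side. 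This coefficientwise stabilization is precisely the sense in which $F_\infty(x,y,0)$ is defined, so the limit is well-posed.

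The hard part, such as it is, will be arguing the combinatorial interpretation that underlies the claim: namely that for $m$ larger than $n$, a composition of $n$ has no $m$-congruence succession if and only if it has no two equal adjacent parts. This is because two adjacent parts $\sigma_i,\sigma_{i+1}$ each at most $n$ can satisfy $\sigma_i\equiv\sigma_{i+1}\pmod m$ with $m>n$ only when $\sigma_i=\sigma_{i+1}$. Consequently $F_m(x,y,0)$ agrees, through degree $n$ in $x$, with the generating function counting Carlitz compositions by their number of parts, and letting $m\rightarrow\infty$ records this stable limit. Assembling these observations—the explicit form of $F_m(x,y,0)$, the coefficientwise stabilization, and the Carlitz interpretation—gives \eqref{c1ae1}.
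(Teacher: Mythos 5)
Your proposal is correct and follows essentially the same route as the paper: substitute $q=0$ into \eqref{c1e1}, let $m\rightarrow\infty$ so that $x^m\rightarrow 0$ in each summand, and note that for $m$ large an $m$-congruence succession forces equal adjacent parts, recovering the Carlitz generating function. The paper states this in one sentence without justifying the limit; your coefficientwise-stabilization argument simply makes the same step rigorous.
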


Let us close this section with a few remarks.

\begin{remark}
Letting $q=1$ in \eqref{t1e1} gives
$$R_{m,r}(x,y,1)=\frac{1}{1-\frac{y}{1-x^m}\sum_{j=1}^s\sum_{\ell=0}^{p-1}x^{\overline{j+\ell r}}}=\frac{1}{1-\frac{y}{1-x^m}\sum_{a=1}^mx^a}=\frac{1-x}{1-x-xy},$$
which agrees with the generating function for the number of compositions of $n$ having $d$ parts.
\end{remark}

\begin{remark}
In \cite{C}, the generating function for the number $c(n,d)$ of Carlitz compositions of $n$ having $d$ parts was obtained as
\begin{equation}\label{reme1}
\sum_{n\geq0}\sum_{d=0}^n c(n,d)x^ny^d=\frac{1}{1+\sum_{j \geq 1}\frac{(-xy)^j}{1-x^j}}.
\end{equation}
Note that formulas \eqref{c1ae1} and \eqref{reme1} are seen to be equivalent since
$$\sum_{a\geq1}\frac{x^ay}{1+x^ay}=\sum_{a\geq1}\sum_{j \geq 1}(-1)^{j-1}x^{aj}y^j=\sum_{j\geq1}(-1)^{j-1}y^j\sum_{a \geq 1}x^{ja}=\sum_{j\geq1}(-1)^{j-1}\frac{(xy)^j}{1-x^j}.$$
\end{remark}

\begin{remark}
Letting $m=1$ in \eqref{c1e1} gives
$$F_1(x,y,q)=\frac{1-x-xy(q-1)}{1-x-xyq}.$$
This formula may also be realized directly upon noting in this case that $q$ marks the number of parts minus one in any non-empty composition, whence
$$F_1(x,y,q)=1+\frac{1}{q}\left( \frac{1-x}{1-x-xyq}-1\right).$$
\end{remark}

\section{Combinatorial results}

We will refer to an $m$-congruence succession when $m=2$ as a \emph{parity succession}, or just a \emph{succession}.  In this section, we will provide some combinatorial results concerning successions in compositions.  Let $F(x,y,q)=F_2(x,y,q)$ denote the generating function which counts the compositions of $n$ having $d$ parts according to the number of parity successions.  Taking $m=2$ in Corollary \ref{c1} gives
\begin{equation}\label{coe1}
F(x,y,q)=\frac{(1-x^2-xy(q-1))(1-x^2-x^2y(q-1))}{(1-x^2)^2-x^3y^2-xy(1-x^2)(1+x)q+x^3y^2q^2}.
\end{equation}
Let $\mathcal{C}_{n,d,a}$ denote the subset of $\mathcal{C}_{n,d}$ whose members contain exactly $a$ successions and let $c(n,d,a)=|\mathcal{C}_{n,d,a}|$.  Comparing coefficients of $x^ny^dq^a$ on both sides of \eqref{coe1} yields the following recurrence satisfied by the array $c(n,d,a)$.

\begin{theorem}\label{t2}
If $n \geq 4$ and $d \geq 3$, then
\begin{align}
c(n,d,&a)=c(n-1,d-1,a-1)+2c(n-2,d,a)+c(n-2,d-1,a-1)+c(n-3,d-2,a)\notag\\
&-c(n-3,d-1,a-1)-c(n-3,d-2,a-2)-c(n-4,d,a)-c(n-4,d-1,a-1).\label{theq1}
\end{align}
\end{theorem}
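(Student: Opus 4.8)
The plan is to treat \eqref{coe1} as a polynomial identity after clearing denominators and then to read off the recurrence by equating coefficients of $x^ny^dq^a$. Write $F(x,y,q)=\sum_{n,d,a}c(n,d,a)x^ny^dq^a$, and let $N$ and $D$ denote the numerator and denominator appearing in \eqref{coe1}, so that $D\cdot F=N$. The crucial structural feature I would exploit is that $N=(1-x^2-xy(q-1))(1-x^2-x^2y(q-1))$ is a \emph{polynomial} whose degree in $y$ is exactly $2$; consequently the coefficient of $x^ny^dq^a$ in $N$ vanishes identically whenever $d\geq3$. This is precisely what makes the recurrence homogeneous (with no inhomogeneous term) in the stated range.

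First I would expand the denominator into the explicit polynomial
$$D=1-2x^2+x^4-x^3y^2+(-xy-x^2y+x^3y+x^4y)q+x^3y^2q^2,$$
using $(1-x^2)^2=1-2x^2+x^4$ and $xy(1-x^2)(1+x)q=(xy+x^2y-x^3y-x^4y)q$. Since multiplying $F$ by a monomial $c\,x^iy^jq^k$ shifts the indices of $c(\cdot,\cdot,\cdot)$ by $(i,j,k)$, extracting the coefficient of $x^ny^dq^a$ from $D\cdot F$ produces the linear combination
\begin{align*}
&c(n,d,a)-2c(n-2,d,a)+c(n-4,d,a)-c(n-3,d-2,a)\\
&\quad{}-c(n-1,d-1,a-1)-c(n-2,d-1,a-1)+c(n-3,d-1,a-1)\\
&\quad{}+c(n-4,d-1,a-1)+c(n-3,d-2,a-2),
\end{align*}
which must equal the coefficient of $x^ny^dq^a$ in $N$, namely $0$ for $d\geq3$.

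Setting this expression equal to zero and solving for $c(n,d,a)$ yields exactly the stated recurrence \eqref{theq1}. The hypotheses $n\geq4$ and $d\geq3$ are precisely what the argument requires: $d\geq3$ forces the right-hand (numerator) coefficient to vanish, while $n\geq4$ together with $d\geq3$ guarantees that every shifted triple satisfies $n-i\geq0$, $d-j\geq0$, $a-k\geq0$, so that each $c$-value is legitimately defined. I expect no conceptual obstacle here once the identity $D\cdot F=N$ and the $y$-degree observation are in hand; the only genuine source of difficulty is arithmetic hygiene, since the expansion of $D$ and the sign bookkeeping across its nine monomials must be carried out carefully to land on the precise combination above.
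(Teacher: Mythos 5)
Your proposal is correct and is exactly the paper's argument: the paper proves Theorem \ref{t2} by the one-line observation that comparing coefficients of $x^ny^dq^a$ on both sides of \eqref{coe1} (after clearing the denominator) yields the recurrence, and your expansion of $D$ together with the resulting nine-term linear combination checks out against \eqref{theq1}. The only quibble is your claim that the hypotheses guarantee $a-k\geq 0$ --- they do not (take $a=0$) --- but this is harmless, since terms with a negative index are zero because $F$ is a power series in $x$, $y$, $q$ with nonnegative exponents.
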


We can also provide a combinatorial proof of \eqref{theq1}, rewritten in the form
\begin{align}
(c(n,d,a)&-c(n-2,d,a))+c(n-3,d-2,a-2)=\notag\\
&(c(n-1,d-1,a-1)-c(n-3,d-1,a-1))+(c(n-2,d,a)-c(n-4,d,a))\notag\\
&+(c(n-2,d-1,a-1)-c(n-4,d-1,a-1))+c(n-3,d-2,a).\label{theq2}
\end{align}
To do so, let $\mathcal{B}_{n,d,a}$ denote the subset of $\mathcal{C}_{n,d,a}$ all of whose members end in a part of size $1$ or $2$.  Note that for all $n$, $d$, and $a$, we have
$$|\mathcal{B}_{n,d,a}|=c(n,d,a)-c(n-2,d,a),$$
by subtraction, since $c(n-2,d,a)$ counts each member of $\mathcal{C}_{n,d,a}$ whose last part is of size $3$ or more (to see this, add two to the last part of any member of $\mathcal{C}_{n-2,d,a}$, which leaves the number of parts and successions unchanged).

So to show \eqref{theq2}, we define a bijection between the sets
$$\mathcal{B}_{n,d,a}\cup \mathcal{C}_{n-3,d-2,a-2}\text{ and } \mathcal{B}_{n-1,d-1,a-1}\cup \mathcal{B}_{n-2,d,a}\cup \mathcal{B}_{n-2,d-1,a-1}\cup \mathcal{C}_{n-3,d-2,a}.$$

For this, we refine the sets as follows.  In the subsequent definitions, $x$, $y$, and $z$ will denote an odd number, an even number, or a number greater than or equal three, respectively.  First, let $\mathcal{B}_{n,d,a}^{(i)}$, $1 \leq i \leq 4$, denote, respectively, the subsets of $\mathcal{B}_{n,d,a}$ whose members (1) end in either $1+1$ or $x+1+2$ for some $x$, (2) end in $y+2+1$ or $2+2$ for some $y$, (3) end in $x+2+1$ or $y+1+2$, or (4) end in $z+1$ or $z+2$ for some $z$.  Let $\mathcal{B}_{n-1,d-1,a-1}^{(i)}$, $1 \leq i \leq 3$, denote the subsets of $\mathcal{B}_{n-1,d-1,a-1}$ whose members end in $1$, $x+2$ for some $x$, or $y+2$ for some $y$, respectively.  Finally, let $\mathcal{B}_{n-2,d-1,a-1}^{(i)}$, $1 \leq i \leq 3$, denote the subsets of $\mathcal{B}_{n-2,d-1,a-1}$ whose members end in $x+1$, $y+1$, or $2$, respectively.

So we seek a bijection between
$\left(\cup_{i=1}^4 \mathcal{B}_{n,d,a}^{(i)}\right)\cup \mathcal{C}_{n-3,d-2,a-2}$ and $$\left(\cup_{i=1}^3 \mathcal{B}_{n-1,d-1,a-1}^{(i)}\right)\cup\left(\cup_{i=1}^3 \mathcal{B}_{n-2,d-1,a-1}^{(i)}\right)\cup \mathcal{B}_{n-2,d,a}\cup \mathcal{C}_{n-3,d-2,a}.$$

Simple correspondences as described below show the following:
\begin{align*}
(i)& \quad|\mathcal{B}_{n,d,a}^{(1)}|=|\mathcal{B}_{n-1,d-1,a-1}^{(1)}\cup \mathcal{B}_{n-1,d-1,a-1}^{(2)}|,\\
(ii)& \quad |\mathcal{B}_{n,d,a}^{(2)}|=|\mathcal{B}_{n-2,d-1,a-1}^{(2)}\cup \mathcal{B}_{n-2,d-1,a-1}^{(3)}|,\\
(iii)& \quad |\mathcal{B}_{n,d,a}^{(3)}|=|\mathcal{C}_{n-3,d-2,a}|,\\
(iv)& \quad |\mathcal{B}_{n,d,a}^{(4)}|=|\mathcal{B}_{n-2,d,a}|,\\
(v)& \quad |\mathcal{C}_{n-3,d-2,a-2}|=|\mathcal{B}_{n-2,d-1,a-1}^{(1)}\cup \mathcal{B}_{n-1,d-1,a-1}^{(3)}|.
\end{align*}

For (i), we remove the right-most $1$ within a member of $\mathcal{B}_{n,d,a}^{(1)}$, while for (ii), we remove the right-most $2$ within a member of
$\mathcal{B}_{n,d,a}^{(2)}$.  To show (iii), we remove the final two parts of $\lambda \in \mathcal{B}_{n,d,a}^{(3)}$ to obtain the composition $\lambda'$.  Note that $\lambda' \in \mathcal{C}_{n-3,d-2,a}$ and that the mapping $\lambda \mapsto \lambda'$ is reversed by adding $1+2$ or $2+1$ to a member of $\mathcal{C}_{n-3,d-2,a}$, depending on whether the last part is even or odd, respectively.  For (iv), we subtract two from the penultimate part of $\lambda \in \mathcal{B}_{n,d,a}^{(4)}$, which leaves the number of successions unchanged.  Finally, for (v), we add either a part of size $1$ or $2$ to $\lambda \in \mathcal{C}_{n-3,d-1,a-2}$, depending on whether the last part of $\lambda$ is odd or even, respectively.  Combining the correspondences used to show (i)--(v) yields the desired bijection and completes the proof.  \hfill \qed

We will refer to a composition having no parity successions as \emph{parity-alternating}.  We now wish to enumerate parity-alternating compositions having a fixed number of parts.  Setting $q=0$ in \eqref{coe1}, and expanding, gives
\begin{align*}
F(&x,y,0)=\frac{(1-x^2+x^2y)(1-x^2+xy)}{(1-x^2)^2-x^3y^2}
=\frac{\left(1+\frac{x^2y}{1-x^2}\right)\left(1+\frac{xy}{1-x^2}\right)}{1-\frac{x^3y^2}{(1-x^2)^2}}\\
&=\left(1+\frac{x^2y}{1-x^2}\right)\left(1+\frac{xy}{1-x^2}\right)\sum_{i\geq0}\frac{x^{3i}y^{2i}}{(1-x^2)^{2i}}\\
&=\sum_{i \geq 0}\left(2y^{2i}\sum_{j \geq 2i-1}\binom{j}{2i-1}x^{2j-i+2}+y^{2i+1}\sum_{j \geq 2i}\binom{j}{2i}x^{2j-i+2}
+y^{2i+1}\sum_{j \geq 2i}\binom{j}{2i}x^{2j-i+1}\right).
\end{align*}
Extracting the coefficient of $x^ny^m$ in the last expression yields the following result.
\begin{proposition}\label{p1}
If $n\geq 1$ and $d \geq0$, then
\begin{equation}\label{p1e1}
c(n,2d,0)=\left\{\begin{array}{ll}2\binom{\frac{n+d}{2}-1}{2d-1},&\mbox{if } n \equiv d ~(\mbox{mod}~2);\\\\
0,&\mbox{otherwise},
\end{array}\right.
\end{equation}
and
\begin{equation}\label{p1e2}
c(n,2d+1,0)=\left\{\begin{array}{ll}\binom{\frac{n+d}{2}-1}{2d},&\mbox{if } n \equiv d ~(\mbox{mod}~2);\\\\
\binom{\frac{n+d-1}{2}}{2d},&\mbox{otherwise}.
\end{array}\right.
\end{equation}
\end{proposition}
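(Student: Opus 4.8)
The plan is to read off the coefficients directly from the expanded form of $F(x,y,0)$ displayed immediately above the statement, since the generating function has already been put in a shape where the exponents of $x$ and $y$ are explicit. Recall that the variable $y$ marks the number of parts, so $c(n,k,0)$ is the coefficient of $x^ny^k$ in $F(x,y,0)$. The three summands in that expansion carry the factors $y^{2i}$, $y^{2i+1}$, and $y^{2i+1}$ respectively, so the first summand alone governs compositions with an even number of parts, while the remaining two govern those with an odd number of parts.

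First I would treat the even case. Extracting the coefficient of $y^{2d}$ forces $i=d$ in the first summand, and matching the exponent $2j-i+2$ of $x$ to $n$ gives $j=\frac{n+d}{2}-1$. This is a nonnegative integer precisely when $n\equiv d\pmod 2$, in which case the accompanying factor $2\binom{j}{2i-1}$ becomes $2\binom{\frac{n+d}{2}-1}{2d-1}$; otherwise the coefficient vanishes, which yields \eqref{p1e1}. Next I would treat the odd case. Extracting the coefficient of $y^{2d+1}$ forces $i=d$ in both remaining summands. In the second summand, matching $2j-i+2=n$ gives $j=\frac{n+d}{2}-1$, which is admissible only when $n\equiv d\pmod 2$ and then contributes $\binom{\frac{n+d}{2}-1}{2d}$; in the third summand, matching $2j-i+1=n$ gives $j=\frac{n+d-1}{2}$, admissible only when $n\not\equiv d\pmod 2$ and then contributing $\binom{\frac{n+d-1}{2}}{2d}$. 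Since exactly one of these two parity conditions holds for any given $n$ and $d$, precisely one summand survives, producing the two-case formula \eqref{p1e2}.

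The only real care required is the parity bookkeeping: checking that $j$ is a nonnegative integer exactly under the stated congruence, and confirming that the lower limits of the inner sums do not exclude the relevant value of $j$. As an independent check, I would verify these formulas combinatorially. A parity-alternating composition with an even number of parts begins with either an odd or an even part, which accounts for the factor $2$; writing each odd part as $2a+1$ with $a\geq0$ and each even part as $2b$ with $b\geq1$ then reduces the enumeration to a stars-and-bars count whose value is the displayed binomial, with the congruence $n\equiv d\pmod 2$ arising as the feasibility condition. The same reduction applied to an odd number of parts produces two asymmetric counts according to the parity of the first part, and exactly one of them is compatible with the parity of $n$, matching the case split in \eqref{p1e2}.
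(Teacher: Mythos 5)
Your proposal is correct and follows essentially the same route as the paper: the proposition is obtained there precisely by extracting the coefficient of $x^ny^k$ from the displayed expansion of $F(x,y,0)$, with the parity condition arising from the integrality of $j$. Your combinatorial cross-check (splitting odd parts as $2a+1$ and even parts as $2b$ and reducing to stars and bars, with the factor $2$ from the parity of the first part) is also the same argument the paper gives immediately after the proposition.
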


It is instructive to give combinatorial proofs of \eqref{p1e1} and \eqref{p1e2}.  For the first formula, suppose $\lambda \in \mathcal{C}_{n,2d,0}$. Then $n$ and $d$ must have the same parity since the parts of $\lambda$ alternate between even and odd values.  In this case, the number of possible $\lambda$ is twice the number of integral solutions to the equation
\begin{equation}\label{p1e3}
\sum_{i=1}^d(x_i+y_i)=n,
\end{equation}
where each $x_i$ is even, each $y_i$ is odd, and $x_i,y_i>0$.
Note that the number of solutions to \eqref{p1e3} is the same as the number of positive integral solutions to
$\sum_{i=1}^d(u_i+v_i)=\frac{n+d}{2},$ which is $\binom{\frac{n+d}{2}-1}{2d-1}$, upon letting $u_i=\frac{x_i}{2}$ and $v_i=\frac{y_i+1}{2}$.  Thus, there are $2\binom{\frac{n+d}{2}-1}{2d-1}$ members of $\mathcal{C}_{n,2d,0}$ when $n$ and $d$ have the same parity, which gives \eqref{p1e1}.

On the other hand, note that members of $\mathcal{C}_{n,2d+1,0}$, where $n$ and $d$ are of the same parity, are synonymous with positive integral solutions to
\begin{equation}\label{p1e4}
\sum_{i=1}^d(x_i+y_i)+z=n,
\end{equation}
where the $x_i$ are even, the $y_i$ are odd, and $z$ is even.  Upon adding $1$ to each $y_i$, and halving, the number of such solutions is seen to be $\binom{\frac{n+d}{2}-1}{2d}$.  Similarly, there are $\binom{\frac{n+d-1}{2}}{2d}$ members of $\mathcal{C}_{n,2d+1,0}$ when $n$ and $d$ differ in parity, which gives \eqref{p1e2}. \hfill \qed\\

Let $a(n)=\sum_{d=0}^n c(n,d,0)$.  Note that $a(n)$ counts all parity-alternating compositions of length $n$. Taking $y=1$ and $q=0$ in \eqref{coe1} gives
$$\sum_{n\geq0}a(n)x^n=F(x,1,0)=\frac{1+x-x^2}{(1-x^2)^2-x^3},$$
and extracting the coefficient of $x^n$ yields the following result.

\begin{proposition}\label{p2}
If $n\geq 4$, then
\begin{equation}\label{p2e1}
a(n)=2a(n-2)+a(n-3)-a(n-4),
\end{equation}
with $a(0)=a(1)=a(2)=1$ and $a(3)=3$.
\end{proposition}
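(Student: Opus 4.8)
The plan is to read the recurrence directly off the rational generating function
$$A(x):=\sum_{n\ge0}a(n)x^n=F(x,1,0)=\frac{1+x-x^2}{(1-x^2)^2-x^3},$$
which was obtained just above by setting $y=1$ and $q=0$ in \eqref{coe1}. First I would expand the denominator as a polynomial in $x$, namely $(1-x^2)^2-x^3=1-2x^2-x^3+x^4$, so that the defining relation becomes
$$A(x)\,(1-2x^2-x^3+x^4)=1+x-x^2.$$

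Next I would compare the coefficient of $x^n$ on the two sides. Writing $A(x)=\sum_{n}a(n)x^n$ and distributing, the coefficient of $x^n$ on the left equals $a(n)-2a(n-2)-a(n-3)+a(n-4)$, under the convention $a(k)=0$ for $k<0$. On the right, the coefficient of $x^n$ is nonzero only for $n\le2$. Hence for every $n\ge3$ the left-hand coefficient must vanish, giving
$$a(n)-2a(n-2)-a(n-3)+a(n-4)=0,$$
which rearranges to the asserted recurrence $a(n)=2a(n-2)+a(n-3)-a(n-4)$. For $n\ge4$ every index appearing is nonnegative, so no boundary convention is needed; this is exactly the form stated. (One notes in passing that the relation in fact already holds at $n=3$, where it reproduces $a(3)=2a(1)+a(0)=3$.)

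Finally, the four initial values must be pinned down. I would obtain them either by expanding $A(x)$ as a power series through order $x^3$, or, more transparently, by enumerating the parity-alternating compositions of small size directly: the empty composition gives $a(0)=1$; the single part $1$ gives $a(1)=1$; among the compositions of $2$ only $2$ is parity-alternating, since $1+1$ has a succession, so $a(2)=1$; and among the compositions of $3$ the parity-alternating ones are $3$, $1+2$, and $2+1$, but not $1+1+1$, so $a(3)=3$. I do not anticipate a genuine obstacle here, since the argument is a routine coefficient extraction from a rational generating function. The only point requiring care is bookkeeping the contribution of the numerator $1+x-x^2$: it is precisely this numerator that confines the nonhomogeneous part of the relation to $n\le2$, and thereby determines where the clean homogeneous recurrence begins.
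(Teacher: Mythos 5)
Your proof is correct and takes essentially the same route as the paper: Proposition \ref{p2} is obtained there precisely by setting $y=1$, $q=0$ in \eqref{coe1} to get $F(x,1,0)=\frac{1+x-x^2}{(1-x^2)^2-x^3}$ and extracting coefficients, which is exactly your clearing of the denominator $1-2x^2-x^3+x^4$ and comparison of coefficients of $x^n$. (The paper additionally supplies a separate combinatorial bijection for the same recurrence, but that is a supplementary argument rather than the proof of the proposition itself.)
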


We provide a combinatorial argument for recurrence \eqref{p2e1}, the initial values being clear.  Let us first define two classes of compositions.  By a \emph{type A (colored) composition} of $n$, we will mean one whose parts are all odd numbers greater than $1$ in which a part of size $i$ is assigned one of $\frac{i-1}{2}$ possible colors. A composition $\rho$ of $n$ is of \emph{type B} if it is of the form $\rho=a+\lambda$, where $a$ is a part of any size (not colored) and $\lambda$ is a composition of $n-a$ of type A.  Given $n \geq 1$, let $\mathcal{S}_n$ denote the (multi-) set consisting of two copies of each composition of $n$ of type A, let $\mathcal{T}_n$ denote the set consisting of all compositions of $n$ of type B, and let $\mathcal{R}_n=\mathcal{S}_n\cup \mathcal{T}_n$.  (For convenience, we take $R_0$ to consist of the empty composition in a set by itself.)

If $k_i$ denotes the number of the color assigned to some part of size $i$ within a composition of type A, then replacing each part $i$ with either $(i-2k_i)+2k_i$ or $2k_i+(i-2k_i)$ yields all members of $\mathcal{C}_n$ having an even number of parts and no parity successions.  Doing the same for every part but the first within a composition of type $B$ (note in this case, the decomposition used for each $i$ is determined by the parity of the first part $a$) yields all members of $\mathcal{C}_n$ having an odd number of parts and no parity successions.  It follows that $|\mathcal{R}_n|=a(n)$.

It then remains to show that $|\mathcal{R}_n|$ satisfies the recurrence \eqref{p2e1}.  By a \emph{maximal} (colored) part of size $i$ within a member of $\mathcal{R}_n$, we will mean one which has been assigned the color $\frac{i-1}{2}$ (note that any part of size $3$ is maximal).  Let $\mathcal{R}_n'$ denote the subset of $\mathcal{R}_n$ consisting of all type $A$ members whose first part is maximal together with all type $B$ members whose first part is $1$ or $2$.  Upon increasing the length of the first part within a member of $\mathcal{R}_{n-2}$ by two (keeping the color the same, if that member belongs to $\mathcal{S}_{n-2}$), one sees that $|\mathcal{R}_n'|=a(n)-a(n-2)$, by subtraction.  To complete the proof of \eqref{p2e1}, we then define a bijection between the sets $\mathcal{R}_{n-2}\cup\mathcal{R}_{n-3}$ and $\mathcal{R}_n'\cup \mathcal{R}_{n-4}$, where $n \geq 4$.

We may assume $n \geq 5$, for the equivalence of the sets in question is clear if $n=4$. Let $\mathcal{S}_n'$ and $\mathcal{T}_n'$ denote the subsets of $\mathcal{R}_n'$ consisting of its type $A$ and type $B$ members, respectively.  To complete the proof, it suffices to define bijections between the sets $\mathcal{S}_{n-2}\cup\mathcal{S}_{n-3}$ and $\mathcal{S}_n'\cup \mathcal{S}_{n-4}$ and between the sets $\mathcal{T}_{n-2}\cup\mathcal{T}_{n-3}$ and $\mathcal{T}_n'\cup \mathcal{T}_{n-4}$.

For the first bijection, if $\lambda \in \mathcal{S}_{n-2}$, then we either increase or decrease the length of the first part of $\lambda$ by two, depending on whether or not this part is maximal (if so, we also increase the color assigned to the part by one, and if not, the color is kept the same).  Note that this yields all members of $S_n'$ whose first part is at least five as well as all members of $\mathcal{S}_{n-4}$.  If $\lambda \in \mathcal{S}_{n-3}$, then we append a colored part of size three to the beginning of $\lambda$, which yields all members of $S_n'$ starting with three.

For the second bijection, we consider cases concerning $\lambda \in \mathcal{T}_{n-2} \cup \mathcal{T}_{n-3}$.  If $\lambda \in \mathcal{T}_{n-2}$ starts with $1$, then we increase the second part of $\lambda$ by two (keeping the assigned color the same) to obtain $\lambda^* \in \mathcal{T}_n'$ starting with $1$ where the second part is not maximal.  If $\lambda \in \mathcal{T}_{n-3}$ starts with $1$, then we replace this $1$ with $2$ and increase the second part of $\lambda$ by two (again, keeping the assigned color the same) to obtain $\lambda^* \in \mathcal{T}_n'$.  Combining the previous two cases then yields all members of $\mathcal{T}_n'$ whose second part is not maximal.  If $\lambda \in \mathcal{T}_{n-2}$ starts with a part $i$ of size two or more, then we append a $2$ to the beginning of $\lambda$ if $i$ is odd and we append a $1$ to the beginning of $\lambda$ and replace $i$ with $i+1$ if $i$ is even. In either case, we take the second part to be maximal in the resulting composition $\lambda^*$ belonging to $\mathcal{T}_n'$.  Finally, if $\lambda \in \mathcal{T}_{n-3}$ starts with a part of size two or more, then we subtract one from this part to obtain $\lambda^* \in \mathcal{T}_{n-4}$.  It may be verified that the composite mapping $\lambda \mapsto \lambda^*$ yields the desired bijection. \hfill \qed

Summing the formulas in Proposition \ref{p1} over $d$ with $n$ fixed, using the fact $\binom{a}{b}=\binom{a-2}{b}+2\binom{a-2}{b-1}+\binom{a-2}{b-2}$, and equating with the result in Proposition \ref{p2} yields a combinatorial proof of the following pair of binomial identities, which we were unable to find in the literature.

\begin{corollary}\label{c2}
If $n\geq0$, then
\begin{equation}\label{c2e1}
a(2n)=\sum_{d=0}^{\lfloor\frac{n+1}{3}\rfloor}\binom{n+d+1}{4d}
\end{equation}
and
\begin{equation}\label{c2e2}
a(2n+1)=\sum_{d=0}^{\lfloor\frac{n}{3}\rfloor}\binom{n+d+2}{4d+2},
\end{equation}
where $a(m)$ is given by \eqref{p2e1}.
\end{corollary}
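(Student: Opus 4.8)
The plan is to compute $a(n)=\sum_{k=0}^n c(n,k,0)$ directly from the closed forms in Proposition \ref{p1}, and then to reconcile the resulting binomial sum with the right-hand side of \eqref{c2e1} or \eqref{c2e2} by means of the double Pascal relation
\[
\binom{a}{b}=\binom{a-2}{b}+2\binom{a-2}{b-1}+\binom{a-2}{b-2},
\]
which is just Pascal's rule applied twice. Since the proof of Proposition \ref{p2} establishes that $a(m)$, defined by the recurrence \eqref{p2e1}, counts the parity-alternating compositions of $m$, proving this coincidence proves the two identities.

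First I would split $a(n)=\sum_k c(n,k,0)$ according to the parity of the number of parts $k$, writing $a(n)=\sum_{d\ge0}c(n,2d,0)+\sum_{d\ge0}c(n,2d+1,0)$ and inserting \eqref{p1e1} and \eqref{p1e2}. The parity conditions $n\equiv d\pmod{2}$ appearing there select, in each of the two sums, either the even or the odd values of $d$; substituting $d=2e$ or $d=2e+1$ as appropriate turns every surviving term into a single binomial coefficient in $e$. Carrying this out for $n$ even and then for $n$ odd should yield
\[
a(2n)=2\sum_{e}\binom{n+e-1}{4e-1}+\sum_{e}\binom{n+e-1}{4e}+\sum_{e}\binom{n+e}{4e+2}
\]
and
\[
a(2n+1)=2\sum_{e}\binom{n+e}{4e+1}+\sum_{e}\binom{n+e}{4e+2}+\sum_{e}\binom{n+e}{4e},
\]
the three families in each expression coming from the even-part contribution and from the two parity branches of the odd-part contribution.

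To finish, I would expand the right-hand sides of \eqref{c2e1} and \eqref{c2e2} by the double Pascal relation: applied to $\binom{n+d+1}{4d}$ (with $a=n+d+1$, $b=4d$) it produces the three terms $\binom{n+d-1}{4d}$, $2\binom{n+d-1}{4d-1}$, $\binom{n+d-1}{4d-2}$, while applied to $\binom{n+d+2}{4d+2}$ it produces $\binom{n+d}{4d+2}$, $2\binom{n+d}{4d+1}$, $\binom{n+d}{4d}$. In the odd case these three families match the displayed expression for $a(2n+1)$ term for term. In the even case the first two families match at once, and the third requires only the index shift $d\mapsto d+1$, under which $\sum_d\binom{n+d-1}{4d-2}$ becomes $\sum_d\binom{n+d}{4d+2}$, matching the expression for $a(2n)$. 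The stated upper limits $\lfloor\frac{n+1}{3}\rfloor$ and $\lfloor\frac{n}{3}\rfloor$ merely record where the binomial coefficients cease to vanish.

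I expect the only delicate step to be the summation above: correctly tracking which residues of $d$ survive each parity condition and performing the substitutions $d=2e$ and $d=2e+1$ without index errors, so that the three binomial families emerge in exactly the shape the double Pascal expansion will later reproduce. Once they do, the final matching is routine, needing only the single reindexing in the even case.
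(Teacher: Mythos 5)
Your proposal is correct and follows exactly the paper's route: sum the closed forms of Proposition \ref{p1} over $d$ with $n$ fixed, apply the identity $\binom{a}{b}=\binom{a-2}{b}+2\binom{a-2}{b-1}+\binom{a-2}{b-2}$ to the right-hand sides, and equate with the count of parity-alternating compositions furnished by Proposition \ref{p2}. The three binomial families you extract in each parity case, and the single reindexing $d\mapsto d+1$ in the even case, are precisely the bookkeeping the paper leaves implicit.
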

Note that both sides of \eqref{c2e1} and \eqref{c2e2} are seen to count the parity-alternating compositions of length $2n$ and $2n+1$, respectively, the right-hand side by the number of parts (once one applies the identity $\binom{a}{b}=\binom{a-2}{b}+2\binom{a-2}{b-1}+\binom{a-2}{b-2}$, which has an easy combinatorial explanation, to the binomial coefficient).  Using \eqref{p2e1}, the binomial sums in \eqref{c2e1} and \eqref{c2e2} can be shown to satisfy fourth order recurrences; see \cite{CE} for other examples of recurrent binomial sums.

\section{Asymptotics}
\label{s:asymptotics}

We recall from \eqref{c1e1} that $F_m(x,y,q)$ is a rational function.
Specializing variables we obtain $F_m(x,1,0)=\sum_{n\geq0}a_nx^n$, which we have seen in the
previous section. The exact formulae given there for the coefficient
$a_n$ are complemented here by asymptotic results. These are analogous
to known results for Smirnov words and Carlitz compositions.

Note that $F_m(x,1,0) = 1/H_m(x)$, where $H_m(x) = 1 - \sum_{a=1}^m
\frac{x^a}{1 + x^a - x^m}$. Each $1+x^a-x^m$ is analytic and so its
modulus over each closed disk centered at $0$ is maximized on the
boundary circle. It can be shown that when $|x|$ is fixed,
$|1+x^a-x^m|$ is maximized when $x^a - x^m$ is positive real, and
minimized when $x^a - x^m$ is negative real. Furthermore, the maximum
over $a$ of this maximum value occurs when $a=1$, and similarly for the
minimum.

By Pringsheim's theorem, there is a minimal singularity of $F_m$ on the
positive real axis, and this is precisely the smallest real zero $\rho_m$
of $H_m$. Furthermore, because $F_m$ is not periodic, this singularity is
the unique one of that modulus. Thus $F_m$ is analytic in the open disk
centered at $0$ with radius $\rho_m$. Note that $\rho_m \geq 1/2$ because the
exponential growth rate of unrestricted compositions is $2$, and so our
restricted class of  compositions must grow no faster. However $\rho_m
\leq 1$ because the sum defining $H_m$ has value $m$ when $x = 1$.
Since $\rho_m$ is the smallest positive real solution of
$$
\sum_{a=1}^m \frac{\rho^a}{1 + \rho^a - \rho^m} = 1,
$$
it follows that $\rho_m$ is an algebraic number of degree at most $m^2$.
Note that the sum defining $H_m$ shows that $H_m(x) > H_{m+1}(x)$ for
all $m$ and all $0<x<1$. Thus in fact $\rho_m \leq \rho_2 < 0.68$ for all $m$.

The rest of the proof should proceed according to a familiar outline:
apply Rouch\'{e}'s theorem to locate the dominant singularity of
$F_m(x,1,0)$, by approximating $H_m$ with a simpler function having a
unique zero inside an appropriately chosen disk of radius $c$, where
$\rho_m < c < 1$; derive asymptotics for the coefficients $a_n$ via
standard singularity analysis. This technique has been used in several
similar problems, for example for Carlitz compositions.

There are some difficulties with this approach in our case. If we attack
$F_m$ directly, we must derive a result for all $m$. Since $F_m$ is
rational with numerator and denominator of degree at most $m^2$, for
fixed $m$, we could consider using numerical root-finding methods, but
for arbitrary symbolic $m$ this will not work. It is intuitively clear
that for sufficiently large $m$, $F_m$ should be close to $F_\infty$ and
so by using Rouch\'{e}'s theorem, we could reduce to the Carlitz case.

However, even the Carlitz case is not as easy as claimed in the
literature, and we found several unconvincing published arguments.
 Some authors simply assert that
$F_\infty$ has a single root, based on a graph of the function on a
given circle. This can be made into a proof, by approximately evaluating
$F_\infty$ at sufficiently many points and using an upper bound on the
best Lipschitz constant for the function, but this is somewhat
unpleasant. We do not know a way of avoiding this problem --- the minimum modulus of
a function on a circle in the complex plane must be computed somehow. We use an
approach similar to that taken in \cite{GoHi2002}.

The obvious approximating function to use is an initial segment with $k$
terms of the partial sum defining $F_m$. However, it seems easier to use the initial segment
of the sum defining $F_\infty$, which we denote by $S_k$. We will take $k=7$ and $c=0.7$ and denote $S_7$ by $h$. By
using the Jenkins-Traub algorithm as implemented in the Sage command
\texttt{maxima.allroots()}, we see that all roots of $h$, except the real positive
root (approximately $0.572$) have real or imaginary part with modulus more
than $0.7$, so they certainly lie outside the circle $C$ given by $|x| = c$.

To apply Rouch\'{e}'s theorem, we need an upper bound for $|H_m - h|$ on
$C$ which is less than the lower bound for $|h|$ on $C$. We first claim
that the lower bound for $|h|$ on $C$ is at least $0.43$. This can be
proved by evaluation at sufficiently many points of $C$. Since $|h'(z)|$ is bounded by
$100$ on $C$, $N:=1000$ points certainly suffice. This is because the minimum
of $|h|$ at points of the form $0.7\exp(2\pi ij/N)$, for $0\leq j
\leq N$, is more than $0.51$ (computed using Sage), and the distance
between two such points is at most $8\times 10^{-4}$, by Taylor
approximation. In fact, it seems that the minimum indeed occurs at
$x=0.7$, but this is not obvious to us.

We now compute an upper bound for $|H_m - h|$. To this end, we compute, when $m\geq 7$,
\begin{align*}
\left| H_m(x) - h(x)\right| & = \sum_{a=8}^m \frac{x^a}{1 - x^a + x^m} +
\left(\sum_{a=1}^7 \frac{x^a}{1 - x^a + x^m}  - h(x) \right)\\
& \leq \sum_{a=8}^\infty \frac{c^a}{1 - c^8} + \left(\sum_{a=1}^7
\frac{x^a}{1 - x^a + x^m}  - h(x) \right).
\end{align*}
The sum $\sum_{a=8}^\infty \frac{c^a}{1 - c^8}$ has  value less than
$0.204$ when $c=0.7$. The second sum is smaller than $0.2$ which can be
verified by a similar argument to the above, by evaluating at
sufficiently many points.

We still need to deal with the cases $m < 7$ and these can be done directly via inspection after computing all roots numerically as above.

The above arguments show that $\rho_m$ is a simple zero of $H_m$ and
hence a simple pole of the rational function $F_m(x,1,0)$. The
asymptotics now follow in the standard manner by a residue computation,
and we obtain
$$
a_n \sim \rho_m^{-n} \frac{1}{-\rho_m H'(\rho_m)}.
$$

For example, $F_2(x,1,0) = (1+x+x^2)/((1-x^2)^2 - x^3)$ has a minimal
singularity at $\rho_2 \approx 0.6710436067037893$, which yields the following result.

\begin{theorem}\label{asymth}  We have
$$a(n)  \sim (0.6436) 1.4902^n$$
for large $n$, where $a(n)$ is given by \eqref{p2e1}.
\end{theorem}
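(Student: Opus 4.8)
The plan is to recognize this as the $m=2$ instance of the general asymptotic formula $a_n \sim \rho_m^{-n}/(-\rho_m H_m'(\rho_m))$ derived above, and then to carry out the required numerical evaluation using the fact that for $m=2$ the relevant generating function is an explicit rational function with a quartic denominator. The delicate analytic point, namely that $\rho_m$ is a \emph{simple} zero of $H_m$ and the \emph{unique} singularity of $F_m(x,1,0)$ of minimal modulus, was already settled for every $m$ by means of Pringsheim's theorem together with the Rouch\'{e} argument; consequently the content that remains for $m=2$ is entirely computational and, because the denominator is a single concrete polynomial, can even be verified exactly.

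First I would write $F_2(x,1,0)=N(x)/D(x)$ with $D(x)=(1-x^2)^2-x^3$, a polynomial of degree four, and check that this representation is in lowest terms, i.e. that $N$ and $D$ have no common root; this guarantees that the zeros of $D$ are genuine poles of $F_2(x,1,0)$. The four roots of $D$ can be obtained in closed form (or with certified numerics), and one identifies $\rho_2$ as the smallest positive real root, $\rho_2\approx 0.6710436067$. The exponential growth base is then $\rho_2^{-1}\approx 1.4902$, which already reproduces the base appearing in the statement.

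Next I would confirm that $\rho_2$ is the unique dominant pole and that it is simple. For general $m$ this is the conclusion of the Rouch\'{e} analysis, but for $m=2$ it is immediate: having all four roots of $D$ explicitly in hand, one verifies directly that the remaining three roots have modulus strictly greater than $\rho_2$ (the second real root lies near $1.91$ and the complex-conjugate pair has modulus near $0.88$), and that $D'(\rho_2)\neq 0$. With a single simple pole of minimal modulus, the transfer step is the standard residue computation for meromorphic functions: near $x=\rho_2$ one has $F_2(x,1,0)\sim \frac{N(\rho_2)}{D'(\rho_2)}\cdot\frac{1}{x-\rho_2}$, whence
$$a(n)\sim \frac{-N(\rho_2)}{\rho_2\,D'(\rho_2)}\,\rho_2^{-n},$$
which coincides with the formula $a_n\sim \rho_2^{-n}/(-\rho_2 H_2'(\rho_2))$ quoted above since $F_2(x,1,0)=1/H_2(x)$. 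Evaluating the leading constant at the numerical value of $\rho_2$ yields approximately $0.6436$, completing the estimate.

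The only step requiring genuine care is the numerical evaluation: both the constant $0.6436$ and the base $1.4902$ must be pinned down to the stated precision, which amounts to computing $\rho_2$ and the residue accurately. I expect this to be the main, and in fact the sole, obstacle; but because $D$ is a single explicit quartic it can be handled rigorously, either by solving $D$ by radicals and evaluating symbolically or by interval root-finding, so none of the difficulty encountered in the general-$m$ case (certifying the minimum modulus of a function on a circle) arises here.
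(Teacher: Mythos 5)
Your proposal is correct and follows essentially the same route as the paper: the paper handles the cases $m<7$ (including $m=2$) exactly by ``computing all roots numerically'' of the explicit denominator, confirming a unique simple dominant pole at $\rho_2\approx 0.671044$, and then applying the residue formula $a(n)\sim \rho_2^{-n}/(-\rho_2 H_2'(\rho_2))$. Your numerical checks (second real root near $1.91$, complex pair of modulus near $0.88$, leading constant $0.6436$) all agree with the paper; note only that the correct numerator is $N(x)=1+x-x^2$ as in Section 3, not the $1+x+x^2$ misprinted in Section 4, though this does not affect the stated constants.
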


For example, when $n=20$, the relative error in this approximation is already less than
$0.2\%$. The exponential rate $1/\rho_m$ approaches the rate for Carlitz compositions,
namely $1.750\cdots$, as $m\to\infty$.

We can in fact derive asymptotics in the multivariate case. For each $m$, it is possible in principle to compute asymptotics in a given direction by analysis of $F_m(x,y,q)$, for example, using the techniques of Pemantle and Wilson \cite{PeWi2002}. To do this for arbitrarily large $m$ is computationally challenging, and so in order to limit the length of this article, we give a sketch only for $m=2$, and refer the reader to the above reference or the more recent book \cite{PeWi2013}.
In this case we have
\begin{align*}
F_2(x,y,q) & = \left( 1- \frac{xy}{1 - x^2 - xy(q-1)} + \frac{x^2y}{1 - x^2 - x^2y(q-1)}\right)^{-1}\\
& = \frac{(1 - x^2 - xy(q-1))(1 - x^2 - x^2y(q-1))}{1-2x^2-qxy+x^4-qx^2y-x^3y^2+qx^3y+qx^4y+q^2x^3y^2}.
\end{align*}

By standard algorithms, for example as implemented in Sage's
\texttt{solve} command, one can check that the partial derivatives $H_x,
H_y, H_q$ never vanish simultaneously, so that the variety defined by
$H_m$ is smooth everywhere. The critical point equations are readily solved by the same
method. For example, for the special case when $n=2d=4t$, where $t$ denotes the number of congruence successions, we obtain (using the Sage package \texttt{amgf} \cite{amgf}) the first order asymptotic
$$
(0.379867842273) (15.8273658508862)^t/(\pi t),
$$
which has relative error just over $1\%$ when $n=32$ (the number of such compositions being
54865800). Bivariate asymptotics when $q=0$, or when $y=1$, could be derived similarly. The smoothness of the variety defined by $H_m$ leads quickly to Gaussian limit laws in a standard way as described in \cite{PeWi2013}, and we leave the reader to explore this further.


\end{document}